\documentclass[hidelinks,12pt,oneside]{amsart}
\usepackage{amssymb,amsmath,amsthm}
\usepackage{hyperref}
\usepackage[bottom=2.5cm,right=2.5cm,top=2.5cm,left=2.5cm]{geometry}
%\usepackage{lineno}
%\linenumbers

\newtheorem{thm}{\textbf{Theorem}}[section]
\newtheorem{pro}[thm]{\textbf{Proposition}}
\newtheorem{rem}[thm]{\textbf{Remark}}
\newtheorem{cor}[thm]{\textbf{Corollary}}

\newtheorem{cond}[thm]{\textbf{Condition}}
\theoremstyle{definition}
\newtheorem{lemma}[thm]{\textbf{Lemma}}
\newtheorem{exmp}[thm]{Example}
\numberwithin{equation}{section}
\parskip=1ex

\title{Families of Young Functions and Limits of Orlicz Norms}
\author{Sullivan MacDonald and Scott Rodney}
\date{May 2023}
\address{\label{francis} Sullivan F. MacDonald, MSc student in the Dept. of Mathematics \& Statistics\\ McMaster University \\ Hamilton, ON L8S4L8, Canada}
\email{\href{mailto:macdos55@mcmaster.ca}{macdos55@mcmaster.ca}}
\address{\label{scott} Scott Rodney\\ Dept. of Mathematics, Physics and Geology \\ Cape Breton University \\ Sydney, NS B1Y3V3, Canada} 
\email{\href{mailto:scott_rodney@cbu.ca}{scott\_rodney@cbu.ca}}

\begin{document}

\thanks{\hyperref[scott]{S.~Rodney} was supported by the NSERC Discovery Grant Program. \hyperref[francis]{S.F.~MacDonald} was supported by the NSERC USRA Program and the Dept. of Mathematics \& Statistics at McMaster University}

\begin{abstract}
Given a $\sigma$-finite measure space \((X,\mu)\), a Young function \(\Phi\), and a one-parameter family of Young functions \(\{\Psi_q\}\), we find necessary and sufficient conditions for the associated Orlicz norms of any function \(f\in L^\Phi(X,\mu)\) to satisfy 
\[
    \lim_{q\rightarrow \infty}\|f\|_{L^{\Psi_q}(X,\mu)}=C\|f\|_{L^\infty(X,\mu)}.
\] 
The constant \(C\) is independent of \(f\) and depends only on the family \(\{\Psi_q\}\). Several examples of one-parameter families of Young functions satisfying our conditions are given, along with counterexamples when our conditions fail.
\end{abstract}

\maketitle

\section{Introduction}

It is a well-known result in classical analysis (see e.g. \cite{royden} and \cite{rudin}) that if \((X,\mu)\) is a measure space and if \(f\in L^r(X,\mu)\) for some \(r\geq 1\), then
\begin{equation}\label{basic}
    \lim_{p\rightarrow\infty}\|f\|_{L^p(X,\mu)}= \|f\|_{L^\infty(X,\mu)}.
\end{equation}
The authors of \cite{DCU-SR21} investigated a similar limiting property of Orlicz norms associated with a one-parameter family of Young functions \(\{\Psi_q\}\) defined by setting \(\Psi_q(t)=t^p\log(e-1+t)^q\), where \(p\geq 1\) is fixed and \(q\) can be any positive real number. They showed that if $f$ belongs to the Orlicz space $L^{\Psi_{q_0}}(X,\mu)$ for some $q_0>0$ then 
\begin{equation}\label{loglim}
    \lim_{q\rightarrow\infty}\|f\|_{L^{\Psi_q}(X,\mu)}=\|f\|_{L^\infty(X,\mu)}.
\end{equation}
Here, \(\|f\|_{L^{\Psi_q}(X,\mu)}\) denotes the Luxembourg norm of \(f\) with respect to $\Psi_q$ and it is given by
\[
    \|f\|_{L^{\Psi_q}(X,\mu)}=\inf\bigg\{\lambda>0:\int_X\Psi_q\bigg(\frac{|f|}{\lambda}\bigg)~d\mu\leq1\bigg\}.
\]
\indent Modifying the proof of \cite[Thm. 1]{DCU-SR21}, we show that \eqref{loglim} holds for any family of Young functions $\{\Psi_q\}$ that satisfies Condition \ref{a-admissible}, which we call \emph{\(\delta\)-admissibility}. Like the authors of \cite{DCU-SR21}, our efforts were motivated by an application in partial differential equations, where we sought to employ a Moser iterative scheme in Orlicz spaces to study regularity of weak solutions to Poisson's equation. Ultimately those results were achieved using alternative techniques in \cite{DCU-SR20}.

Nevertheless, our main result Theorem \ref{main} may be useful in the study of related problems. Furthermore, it illustrates a surprising relationship between Orlicz spaces defined with respect to $(X,\mu)$, and the space $L^\infty(X,\mu)$ of essentially bounded functions on \(X\).

Throughout this work we assume that \((X,\mu)\) is a positive measure space with \(\mu(X)>0\). To state our main result concisely, we begin by defining \(\delta\)-admissibility.

\begin{cond}\label{a-admissible} Given \(\delta>0\), a one-parameter family of Young functions \(\{\Psi_q\}\) is said to be \(\delta\)-admissible if \(\displaystyle\lim_{q\rightarrow\infty} \Psi_q(t) = \infty\) for \(t>\delta\), and for \(0<t<\delta\) one of the following holds:
\begin{itemize}
    \item[(i)] If \(\mu(X)=\infty\) then \(\displaystyle\lim_{q\rightarrow\infty} \Psi_q(t) =0\),
    \item[(ii)] If \(\mu(X)<\infty\) then \(\displaystyle\lim_{q\rightarrow\infty} \Psi_q(t)< \mu(X)^{-1}\).
\end{itemize}
\end{cond}

Perhaps the simplest \(\delta\)-admissible family is the \(1\)-admissible collection obtained by taking \(\Psi_q(t)=t^q\) for \(q\geq1\), and more examples of \(\delta\)-admissible families are discussed in Section \ref{examples}. 

Now we state our main result concerning these families of Young functions.

\begin{thm}\label{main} Let $(X,\mu)$ be a $\sigma$-finite measure space and let \(\{\Psi_q\}\) be a one-parameter family of Young functions. Let \(\Phi\) be another Young function such that for any \(k>0\), the composition
\begin{equation}\label{phicond}
    \frac{t}{\Psi_q^{-1}(\Phi(t))}
\end{equation}
is non-decreasing on the interval \([0,k]\) whenever \(q\) is sufficiently large. Then the identity
\begin{equation}\label{infty:1}
    \lim_{q\rightarrow \infty} \|f\|_{L^{\Psi_q}(X,\mu)} =\frac{1}{\delta}\|f\|_{L^\infty(X,\mu)}
\end{equation}
holds for every \(f\in L^\Phi(X,\mu)\) if and only if $\{\Psi_q\}$ is a  \(\delta\)-admissible family for some \(0<\delta<\infty\). 
\end{thm}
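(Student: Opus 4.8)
The plan is to prove the two implications separately, with the bulk of the work in the "if" direction (δ-admissibility implies the limit identity). Throughout, I would first reduce to the normalized case: by homogeneity of both the Luxembourg norm and the $L^\infty$ norm, it suffices to treat $f$ with $\|f\|_{L^\infty(X,\mu)}=1$, and then I must show $\lim_{q\to\infty}\|f\|_{L^{\Psi_q}} = 1/\delta$. Equivalently, fixing $\varepsilon>0$, I want to show that for all large $q$,
\[
    \frac{1}{\delta+\varepsilon} \leq \|f\|_{L^{\Psi_q}(X,\mu)} \leq \frac{1}{\delta-\varepsilon},
\]
which amounts to controlling $\int_X \Psi_q(|f|/\lambda)\,d\mu$ for $\lambda$ slightly above and slightly below $1/\delta$.

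**The "if" direction.**

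For the upper bound, I would take $\lambda = 1/(\delta-\varepsilon)$, so that $|f|/\lambda \leq \delta - \varepsilon$ a.e. (using $\|f\|_\infty=1$). On the set where $|f|/\lambda$ is bounded away from $0$ and below $\delta$, δ-admissibility forces $\Psi_q(|f|/\lambda)$ to be small — either tending to $0$ pointwise (case $\mu(X)=\infty$) or bounded below $\mu(X)^{-1}$ (case $\mu(X)<\infty$). The dominated convergence theorem is the natural tool here, but applying it requires an integrable dominating function; this is exactly where the hypothesis \eqref{phicond} enters. The monotonicity of $t/\Psi_q^{-1}(\Phi(t))$ on $[0,k]$ gives, for $0\le s\le t\le k$, the inequality $\Psi_q^{-1}(\Phi(s))/s \le \Psi_q^{-1}(\Phi(t))/t$, which after rearranging and applying $\Psi_q$ yields a pointwise bound of the form $\Psi_q(c\,|f|) \le C\,\Phi(|f|)$ for suitable constants, so that $\Phi(|f|)\in L^1$ dominates the relevant integrands. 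This lets me pass to the limit and conclude $\int_X \Psi_q(|f|/\lambda)\,d\mu \le 1$ for large $q$, hence $\|f\|_{L^{\Psi_q}} \le 1/(\delta-\varepsilon)$. For the lower bound I take $\lambda = 1/(\delta+\varepsilon)$; now on a set of positive measure $|f|/\lambda$ exceeds $\delta$ (since $\|f\|_\infty = 1$ means $|f| > \delta/(\delta+\varepsilon)\cdot\ldots$ — more carefully, $|f|$ comes arbitrarily close to $1$ on a set of positive measure), and there $\Psi_q(|f|/\lambda)\to\infty$ by the first clause of δ-admissibility, forcing $\int_X \Psi_q(|f|/\lambda)\,d\mu > 1$ eventually by Fatou's lemma, so $\|f\|_{L^{\Psi_q}} \ge 1/(\delta+\varepsilon)$.

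**The "only if" direction and the main obstacle.**

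For the converse, I would argue contrapositively: if $\{\Psi_q\}$ is not δ-admissible for any finite $\delta$, I produce an $f\in L^\Phi$ violating \eqref{infty:1}. The natural candidates are characteristic-function-type examples — take $f = \mathbf{1}_E$ or a simple scaling thereof on a set $E$ of suitable finite positive measure (available since $\mu$ is σ-finite and $\mu(X)>0$), so that $\|f\|_{L^\infty}=1$ and $\|f\|_{L^{\Psi_q}} = 1/\Psi_q^{-1}(1/\mu(E))$; then a failure of the limit behavior of $\Psi_q$ at some threshold translates directly into the wrong limit for this norm. The case analysis mirrors Condition \ref{a-admissible}: failure of the "$\to\infty$ for $t>\delta$" clause, or of clause (i)/(ii) below $\delta$, each must be handled, and one must also rule out the degenerate possibilities $\delta = 0$ and $\delta = \infty$ by exhibiting appropriate test functions. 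I expect the main obstacle to be the "if" direction's upper-bound estimate: making the domination argument airtight requires carefully extracting an integrable majorant uniformly in large $q$ from the monotonicity hypothesis \eqref{phicond}, and handling the two measure-space cases $\mu(X)<\infty$ and $\mu(X)=\infty$ with the correct thresholds ($\mu(X)^{-1}$ versus $0$) so that the limiting integral lands at or below $1$ rather than overshooting. A secondary subtlety is the interplay between the "$q$ sufficiently large" in \eqref{phicond} (which may depend on $k$, hence on $f$ through $\|f\|_\infty$ and the choice of $\lambda$) and the order of quantifiers in the limit.
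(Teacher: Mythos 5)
Your sufficiency argument follows essentially the same route as the paper (a pointwise comparison extracted from the monotonicity of \eqref{phicond} for the upper bound, and a positive-measure level set near the essential supremum for the lower bound; your Fatou argument there is interchangeable with the paper's Chebyshev inequality). But there are two concrete problems in it. First, the normalization: you fix \(\|f\|_{L^\infty}=1\), yet your dominating function is \(\Phi(|f|)\). Membership in \(L^\Phi\) only guarantees \(\int_X\Phi(|f|/\lambda)\,d\mu\leq 1\) for \(\lambda=\|f\|_\Phi\); after rescaling so that \(\|f\|_\infty=1\) the integral \(\int_X\Phi(|f|)\,d\mu\) can be infinite (think of exponential-type \(\Phi\) on an infinite measure space), so \(\Phi(|f|)\) need not be an integrable majorant. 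The paper instead normalizes \(\|f\|_\Phi=1\) and proves \(\lim_q\|f\|_{\Psi_q}=\|f\|_\infty/\delta\); with that choice the monotonicity of \eqref{phicond} on \([0,\|f\|_\infty]\) gives \(\Psi_q\bigl(|f|\Psi_q^{-1}(\Phi(\|f\|_\infty))/\|f\|_\infty\bigr)\leq\Phi(|f|)\) pointwise, whence \(\|f\|_{\Psi_q}\leq\|f\|_\infty/\Psi_q^{-1}(\Phi(\|f\|_\infty))\) directly, with no dominated convergence needed. (Your displayed inequality \(\Psi_q^{-1}(\Phi(s))/s\leq\Psi_q^{-1}(\Phi(t))/t\) for \(s\leq t\) is also reversed, though the bound you draw from it is the correct one.) Second, you silently exclude \(\|f\|_\infty=\infty\); the identity must still be verified there, and the paper does so by truncating to \(f_N=\min\{|f|,N\}\) and letting \(N\to\infty\).

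The necessity direction is where your sketch is genuinely incomplete. You correctly identify indicator functions as the test objects, but \(f=\chi_E\) only yields \(\lim_q\Psi_q^{-1}(1/\mu(E))=\delta\) at values \(t=1/\mu(E)\) that are actually attained as reciprocal measures --- and \(\sigma\)-finiteness does not give you sets of every prescribed measure (counting measure on \(\mathbb{N}\) attains only integer measures). To control \(\Psi_q^{-1}(t)\) at an arbitrary \(t>0\) the paper picks two attainable values \(t_2<t_1<t\), writes \(t_1=\lambda t_2+(1-\lambda)t\), and uses concavity of \(\Psi_q^{-1}\) together with its monotonicity to squeeze \(\Psi_q^{-1}(t)\) between the known limits; this interpolation is the essential missing step. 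You also need the (nontrivial) equivalence between ``\(\Psi_q^{-1}(t)\to\delta\) for all \(t>0\) (resp. \(t\geq\mu(X)^{-1}\))'' and \(\delta\)-admissibility itself --- the paper's Proposition \ref{invlim} --- which your contrapositive framing would have to reprove; your remark about ``ruling out \(\delta=0\) and \(\delta=\infty\)'' does not substitute for it, since the \(\delta\) in \eqref{infty:1} is already a fixed finite positive number.
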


\begin{rem} \label{Mainremark}
\begin{enumerate}\item We note that $\sigma$-finiteness of $(X,\mu)$ is only required for the forward implication. Further, in many cases \eqref{phicond} is non-decreasing on all of  $[0,\infty)$ for large \(q\). For example, if $\Psi_q(t)=t^q$ and $\Phi(t) = t^r$ for $r\geq 1$, then \eqref{phicond} is non-decreasing on $[0,\infty)$ for $q\geq r$.\\

\item The distinction between cases (i) and (ii) in Condition \ref{a-admissible} is needed to prove necessity of \(\delta\)-admissibility for identity \eqref{infty:1}, but it is not needed for sufficiency. Indeed, if \(\mu(X)<\infty\) and \(0<t<\mu(X)^{-1}\) then one cannot select sets of large enough \(\mu\)-measure with which to compute the limit of \(\Psi_q^{-1}(t)\) as \(q\rightarrow\infty\) using \eqref{infty:1}. We also note that Condition \ref{a-admissible} is weaker than the closely related sufficient condition where $\displaystyle\lim_{q\rightarrow\infty}\Psi_q(t)=0$ in (i) and (ii).% We also note that one can recover estimate (ii) from \eqref{infty:1} via Proposition \ref{invlim}, but not the identity in (i). %The definition of \(\delta\)-admissibility used above was chosen since it characterizes the families \(\{\Psi_q\}\) for which \eqref{infty:1} holds. Though Condition \ref{a-admissible} is inconvenient at times, since it requires us to treat separate cases in many of our proofs, it allows us to present our main results in a straightforward manner.
\end{enumerate}
\end{rem}

Usually it is difficult to find the inverse of a given Young function in closed form. To make \eqref{phicond} easier to verify for given examples of \(\Phi\) and \(\Psi_q\), notice that \eqref{phicond} is non-decreasing on \([0,k]\) exactly when the following function is non-decreasing on \([0,\Phi(k)]\): %Thinking about potential fixes here -Francis
\[
    \frac{\Phi^{-1}(t)}{\Psi_q^{-1}(t)}.
\]
So, to check that the conditions of Theorem \ref{main} hold for a Young function \(\Phi\) and a given family \(\{\Psi_q\}\), it suffices to compute the inverse of \(\Psi_q\) for each $q$, and to know either \(\Phi\) or \(\Phi^{-1}\).

The theorem stated above implies the main result of \cite{DCU-SR21},  and we include a streamlined proof of this special case in Section \ref{log-sec}. Furthermore, we observe that if \(\Phi(t)=t\) then \eqref{phicond} is non-decreasing whenever \(\Psi_q\) is a Young function, and from Theorem \ref{main} we obtain the following result that involves no growth condition.

\begin{cor}\label{maincor} If $(X,\mu)$ is a $\sigma$-finite measure space and \(\{\Psi_q\}\) is \(\delta\)-admissible for \(0<\delta<\infty\), then \eqref{infty:1} holds for every $f\in L^1(X,\mu)$.
\end{cor}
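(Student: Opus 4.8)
The plan is to obtain Corollary \ref{maincor} directly from Theorem \ref{main} by taking $\Phi(t)=t$. First I would note that for this choice the Luxembourg norm is literally the $L^1$-norm,
\[
    \|f\|_{L^\Phi(X,\mu)}=\inf\Bigl\{\lambda>0:\int_X\frac{|f|}{\lambda}\,d\mu\le 1\Bigr\}=\int_X|f|\,d\mu,
\]
so that $L^\Phi(X,\mu)=L^1(X,\mu)$ with identical norms, and the conclusion of Theorem \ref{main} for all $f\in L^\Phi(X,\mu)$ says precisely that \eqref{infty:1} holds for all $f\in L^1(X,\mu)$.

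Next I would verify the growth hypothesis \eqref{phicond} of Theorem \ref{main} for $\Phi(t)=t$ and an arbitrary family $\{\Psi_q\}$ of Young functions. This is the observation recorded just before the corollary: with $\Phi(t)=t$ the quotient in \eqref{phicond} is $t/\Psi_q^{-1}(t)$, and this is non-decreasing on all of $[0,\infty)$ for every $q$ because each $\Psi_q$ is convex with $\Psi_q(0)=0$, so that $s\mapsto\Psi_q(s)/s$ is non-decreasing and hence, passing to inverses, $s\mapsto s/\Psi_q^{-1}(s)$ is non-decreasing as well. (Equivalently, in the reformulation noted after Theorem \ref{main}, $\Phi^{-1}(t)/\Psi_q^{-1}(t)=t/\Psi_q^{-1}(t)$ is non-decreasing.) In particular \eqref{phicond} is non-decreasing on every interval $[0,k]$ for every $q$, so the hypotheses of Theorem \ref{main} are satisfied.

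Since $\{\Psi_q\}$ is assumed to be $\delta$-admissible for some $0<\delta<\infty$, Theorem \ref{main} now gives $\lim_{q\to\infty}\|f\|_{L^{\Psi_q}(X,\mu)}=\tfrac1\delta\|f\|_{L^\infty(X,\mu)}$ for every $f\in L^\Phi(X,\mu)=L^1(X,\mu)$, which is the assertion. No serious obstacle arises here: the only nontrivial input is Theorem \ref{main} itself, and the verification of its hypotheses amounts to the elementary monotonicity of $t/\Psi_q^{-1}(t)$. I would also remark that only the ``if'' direction of Theorem \ref{main} is used, so by Remark \ref{Mainremark} the hypothesis that $(X,\mu)$ be $\sigma$-finite is not in fact needed for the corollary.
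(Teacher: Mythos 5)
Your proposal is correct and is essentially the paper's own argument: the corollary is obtained by applying Theorem \ref{main} with $\Phi(t)=t$, for which \eqref{phicond} reduces to $t/\Psi_q^{-1}(t)$ and is non-decreasing by convexity of each $\Psi_q$, exactly as noted in the paragraph preceding the corollary. Your added observations (that the Luxembourg norm for $\Phi(t)=t$ is the $L^1$-norm, and that $\sigma$-finiteness is only needed for the converse direction) are accurate and consistent with Remark \ref{Mainremark}.
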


The remainder of this paper is organized as follows. In Section \ref{Prelims} we establish preliminary results for Young functions and Orlicz spaces, and in Section \ref{examples} we discuss several examples of \(\delta\)-admissible families to show how Theorem \ref{main} can be applied. Section \ref{proof} is then devoted to the proof of our main result, and Section \ref{log-sec} examines the special case of log-bumps. We conclude with Section \ref{counters}, where we demonstrate that identity \eqref{infty:1} can fail if the family \(\{\Psi_q\}\) is not \(\delta\)-admissible for any finite \(\delta>0\).

\section{Preliminaries}\label{Prelims}

This section contains a brief introduction to Young functions and Orlicz spaces. Our discussion is largely expository, and for a complete treatment the reader is referred to \cite{rao-ren91}. To begin, a non-negative function \(\psi: [0,\infty)\rightarrow [0,\infty)\) is said to be a density if it is right continuous, non-decreasing, \(\psi(t)=0\) exactly when \(t=0\), and \(\psi(t) \rightarrow\infty\) as \(t\rightarrow\infty\).  Given a density \(\psi\), the associated function \(\Psi:[0,\infty)\rightarrow [0,\infty)\) defined by
\[
    \Psi(t) = \int_0^t \psi(s)ds
\]
is called a Young function.  For our purposes, the important functional properties of \(\Psi\) are that it is continuous, strictly increasing,  and convex on \((0,\infty)\). Moreover, it is clear that \(\Psi(0)=0\) and that \(\Psi(t)\rightarrow\infty\) as \(t\rightarrow\infty\). Since the function \(\Psi(t)=t\) has constant density it is not a Young function according to the definition above, however for our purposes it can often be treated as one.

Given a Young function \(\Psi\) and a measure space \((X,\mu)\), the Orlicz space \(L^\Psi(X,\mu)\) is defined as the collection of \(\mu\)-measurable functions \(f:X\rightarrow\mathbb{R}\) for which the Luxembourg norm 
\[
    \|f\|_{L^{\Psi}(X,\mu)} = \inf\bigg\{\lambda>0: \int_X \Psi\left( \frac{|f|}{\lambda}\right)d\mu \leq 1\bigg\}
\]
is finite. Equipped with this norm \(L^\Psi(X,\mu)\) is a Banach space; see \cite{rudin}. The Orlicz classes generalize the classical Lebesgue spaces, and it is easy to verify that \(\|\cdot\|_{L^p(X,\mu)}=\|\cdot\|_{L^\Psi(X,\mu)}\) when \(\Psi(t)=t^p\) for \(p\geq 1\). Orlicz spaces can also provide a finer scale of norms then \(L^p(X,\mu)\) in the following sense: if \(\mu(X)<\infty\) and \(\Psi_q(t)=t^p(1+\log(1+t))^q\) for \(p\geq 1\) and \(q>0\), then for any \(\varepsilon>0\) we have
\[
    L^{p+\varepsilon}(X,\mu)\subsetneq L^{\Psi_q}(X,\mu)\subsetneq L^p(X,\mu).
\]
These inclusions can be verified using H\"older's inequality, and their strictness follows from the examples constructed in \cite{addie}.

In the sections that follow we employ several properties of the Luxembourg norm which we now establish. The first is a version of Chebyshev's inequality on the Orlicz scale. Henceforth we use the notation \(\chi_S\) to denote the indicator function of a set \(S\subseteq X\).

\begin{thm}[Chebyshev's Inequality]\label{markymark}
For any \(\alpha\geq0\), a \(\mu\)-measurable function \(f:X\rightarrow\mathbb{R}\), and a Young function \(\Psi\), the following inequality holds:
\begin{equation}\label{ChebyshevOrlicz}
    \alpha\Psi^{-1}(\mu(\{x\in X:|f(x)|\geq\alpha\})^{-1})^{-1}\leq\|f\|_{L^\Psi(X,\mu)}.
\end{equation}
\end{thm}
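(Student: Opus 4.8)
The plan is to adapt the classical proof of Chebyshev's inequality, using only the monotonicity of $\Psi$ restricted to a super-level set of $|f|$. Fix $\alpha \ge 0$ and set $E = \{x \in X : |f(x)| \ge \alpha\}$ and $m = \mu(E)$. Several cases are degenerate and should be dispatched first: if $\alpha = 0$, or if $m = 0$ (so that, with the natural convention $\Psi^{-1}(\infty) = \infty$, the left-hand side of \eqref{ChebyshevOrlicz} is $0$), or if $\|f\|_{L^\Psi(X,\mu)} = \infty$, then \eqref{ChebyshevOrlicz} holds trivially; and if $m = \infty$ then for any finite $\lambda$ we have $\int_X \Psi(|f|/\lambda)\,d\mu \ge m\,\Psi(\alpha/\lambda) = \infty$, forcing $\|f\|_{L^\Psi(X,\mu)} = \infty$, which is the case just treated. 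So from now on I assume $\alpha > 0$, $0 < m < \infty$, and $\|f\|_{L^\Psi(X,\mu)} < \infty$.

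By definition of the Luxembourg norm as an infimum, for each $\varepsilon > 0$ there is some $\lambda$ with $\|f\|_{L^\Psi(X,\mu)} \le \lambda \le \|f\|_{L^\Psi(X,\mu)} + \varepsilon$ and $\int_X \Psi(|f|/\lambda)\,d\mu \le 1$. Restricting the integral to $E$ and using that $\Psi$ is non-decreasing together with $|f| \ge \alpha$ on $E$, this gives
\[
    1 \;\ge\; \int_X \Psi\!\left(\frac{|f|}{\lambda}\right) d\mu \;\ge\; \int_E \Psi\!\left(\frac{\alpha}{\lambda}\right) d\mu \;=\; m\,\Psi\!\left(\frac{\alpha}{\lambda}\right),
\]
so that $\Psi(\alpha/\lambda) \le m^{-1}$. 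Since $\Psi$ is strictly increasing on $[0,\infty)$, its inverse $\Psi^{-1}$ is non-decreasing, and applying it to both sides yields $\alpha/\lambda \le \Psi^{-1}(m^{-1})$, i.e. $\alpha\,\Psi^{-1}(m^{-1})^{-1} \le \lambda \le \|f\|_{L^\Psi(X,\mu)} + \varepsilon$.

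Letting $\varepsilon \to 0^+$ then delivers \eqref{ChebyshevOrlicz}. I do not anticipate any real obstacle here: the estimate in the display is an immediate consequence of monotonicity, and the only point demanding a little attention — rather than calculation — is the bookkeeping of the degenerate cases ($\alpha = 0$, $m \in \{0,\infty\}$, infinite norm) and the conventions for $\Psi^{-1}$ at the endpoints $0$ and $\infty$.
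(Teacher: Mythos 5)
Your proof is correct and follows essentially the same route as the paper's: both reduce to the key estimate $\mu(E)\,\Psi(\alpha/\lambda)\le\int_X\Psi(|f|/\lambda)\,d\mu\le 1$ and then invert $\Psi$. The only substantive difference is that the paper evaluates the modular at $\lambda=\|f\|_{L^\Psi(X,\mu)}$ itself, citing the standard fact that $\int_X\Psi\big(|f|/\|f\|_{L^\Psi(X,\mu)}\big)\,d\mu\le 1$, whereas you work with an approximating $\lambda$ near the infimum and let $\varepsilon\to0^+$; this, together with your explicit bookkeeping of the degenerate cases ($\alpha=0$, $\mu(E)\in\{0,\infty\}$, infinite norm), makes your version slightly more self-contained but does not change the argument.
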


\begin{proof}
First we establish a simpler form of \eqref{ChebyshevOrlicz} in the norm of \(L^1(X,\mu)\) using a standard argument. Fix \(\alpha>0\) and define \(f_\alpha=\alpha\chi_{\{|f|\geq\alpha\}}\) so that \(f_\alpha\leq |f|\) holds pointwise and 
\[
    \mu(\{|f|\geq\alpha\})=\mu(\{x\in X:|f(x)|\geq\alpha\})=\frac{1}{\alpha}\int_Xf_\alpha(x)d\mu\leq\frac{1}{\alpha}\int_X|f(x)|d\mu=\frac{1}{\alpha}\|f\|_{L^1(X,\mu)}.
\]
Next we replace \(\alpha\) with \(\beta=\alpha/\|f\|_\Psi\). Using that \(\Psi\) is strictly increasing, it follows from the inequality above that
\[
    \mu(\{|f|\geq\beta\|f\|_\Psi\})=\mu\bigg(\bigg\{\Psi\bigg(\frac{|f|}{\|f\|_{L^\Psi(X,\mu)}}\bigg)\geq\Psi(\beta)\bigg\}\bigg)\leq\frac{1}{\Psi(\beta)}\int_X\Psi\bigg(\frac{|f|}{\|f\|_{L^\Psi(X,\mu)}}\bigg)d\mu.
\]
It is a well-known property of the Luxembourg norm, established in many standard references (e.g. \cite{rao-ren91}), that \(\int_X\Psi(|f|/\|f\|_{L^\Psi(X,\mu)})d\mu\leq 1\). It follows that \(\mu(\{|f|\geq\beta\|f\|_{L^\Psi(X,\mu)}\})\leq \Psi(\beta)^{-1}\), and since \(\Psi^{-1}\) is increasing this implies that \(\Psi^{-1}(\mu(\{x\in X:|f|\geq\alpha\})^{-1})^{-1}\leq\beta^{-1}\). Writing \(\beta\) in terms of \(\alpha\) and \(f\) gives \eqref{ChebyshevOrlicz}.
\end{proof}

Equipped with this result, we can compute the Orlicz norms of indicator functions exactly.

\begin{cor}\label{chars}
Let \(S\) be a \(\mu\)-measurable subset of \(X\). Then \(\|\chi_S\|_{L^\Psi(X,\mu)}=\Psi^{-1}(\mu(S)^{-1})^{-1}.\)
\end{cor}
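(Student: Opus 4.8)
The plan is to prove the corollary by pinching \(\|\chi_S\|_{L^\Psi(X,\mu)}\) between two matching bounds: the lower bound comes directly from the Orlicz Chebyshev inequality just established (Theorem \ref{markymark}), and the upper bound comes from an explicit evaluation of the Luxembourg modular of \(\chi_S\), which is trivial because \(\chi_S\) takes only the values \(0\) and \(1\). Throughout I assume \(0<\mu(S)<\infty\); the degenerate cases will be dealt with by the usual conventions at the end.

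For the lower bound I would apply inequality \eqref{ChebyshevOrlicz} to \(f=\chi_S\) with \(\alpha=1\). Since \(\{x\in X:|\chi_S(x)|\geq 1\}=S\), this reads
\[
    \Psi^{-1}\big(\mu(S)^{-1}\big)^{-1}\leq\|\chi_S\|_{L^\Psi(X,\mu)},
\]
which is exactly half of what is claimed, with no further work.

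For the reverse inequality I would use that for any \(\lambda>0\),
\[
    \int_X \Psi\!\left(\frac{\chi_S}{\lambda}\right)d\mu=\Psi\!\left(\frac{1}{\lambda}\right)\mu(S).
\]
Choosing \(\lambda=\Psi^{-1}(\mu(S)^{-1})^{-1}\), which is a finite positive number precisely because \(0<\mu(S)^{-1}<\infty\), and recalling that \(\Psi\) is continuous and strictly increasing from \(0\) to \(\infty\) and hence a genuine bijection with inverse \(\Psi^{-1}\), the right-hand side equals \(\mu(S)^{-1}\,\mu(S)=1\). Thus this \(\lambda\) belongs to the set over which the infimum defining the Luxembourg norm is taken, so \(\|\chi_S\|_{L^\Psi(X,\mu)}\leq\Psi^{-1}(\mu(S)^{-1})^{-1}\), and combining with the previous paragraph gives equality. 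The only point requiring a moment's care — not really an obstacle — is the bookkeeping in the extreme cases: if \(\mu(S)=0\) then \(\chi_S=0\) \(\mu\)-a.e. and both sides vanish under \(\Psi^{-1}(\infty)^{-1}=0\), while if \(\mu(S)=\infty\) then the modular above is \(+\infty\) for every \(\lambda>0\), so \(\|\chi_S\|_{L^\Psi(X,\mu)}=\infty=\Psi^{-1}(0)^{-1}\), and the identity persists.
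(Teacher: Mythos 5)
Your proposal is correct and follows essentially the same route as the paper: the lower bound is Chebyshev's inequality (Theorem \ref{markymark}) applied to \(\chi_S\), and the upper bound comes from evaluating the modular at \(\lambda=\Psi^{-1}(\mu(S)^{-1})^{-1}\) and seeing that it equals \(1\). The only difference is that you also spell out the degenerate cases \(\mu(S)=0\) and \(\mu(S)=\infty\), which the paper leaves implicit.
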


\begin{proof}
The estimate \(\Psi^{-1}(\mu(S)^{-1})^{-1}\leq \|\chi_S\|_\Psi\) follows at once from Chebyshev's inequality. For the reverse inequality, observe that
\[
    \int_{X}\Psi\bigg(\frac{\chi_S}{\Psi^{-1}(\mu(S)^{-1})^{-1}}\bigg)d\mu=\int_S\Psi\bigg(\frac{1}{\Psi^{-1}(\mu(S)^{-1})^{-1}}\bigg)d\mu=\int_S\frac{1}{\mu(S)}d\mu=1.
\]
By the definition of the Luxembourg norm, this implies that \(\|\chi_S\|_\Psi\leq\Psi^{-1}(\mu(S)^{-1})^{-1}\).
\end{proof}

In this paper we work with limits of Orlicz norms that are defined by a one-parameter family  of Young functions. Subject to appropriate growth conditions, these families have useful pointwise properties which we will exploit in the sections to follow. Our main condition on these families is the following generalization of Condition \ref{a-admissible}.

\begin{cond}\label{ab-admissible} Given \(\alpha\geq 0\) and \(\beta\geq \alpha\), a family of Young functions \(\{\Psi_q\}\) is said to be \((\alpha,\beta)\)-admissible if \(\displaystyle\lim_{q\rightarrow\infty} \Psi_q(t) = \infty\) for \(t>\beta\), and for \(0<t<\alpha\) one of the following holds:
\begin{itemize}
    \item[(i)] If \(\mu(X)=\infty\) then \(\displaystyle\lim_{q\rightarrow\infty} \Psi_q(t) =0\),
    \item[(ii)] If \(\mu(X)<\infty\) then \(\displaystyle\lim_{q\rightarrow\infty} \Psi_q(t) <\mu(X)^{-1}\).
\end{itemize}
\end{cond}

\begin{pro}\label{ab-prop}
Let \(\{\Psi_q\}\) be an \((\alpha,\beta)\)-admissible family. If \(\mu(X)=\infty\) and \(t>0\), or if \(\mu(X)<\infty\) and \(t\geq \mu(X)^{-1}\), then
\begin{equation}\label{ab-est}
    \alpha\leq \liminf_{q\rightarrow\infty}\Psi_q^{-1}(t)\leq \limsup_{q\rightarrow\infty}\Psi_q^{-1}(t)\leq \beta.    
\end{equation}
\end{pro}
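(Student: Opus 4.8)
The plan is to exploit the elementary order-reversing interplay between the monotonicity of each $\Psi_q$ and that of its inverse, together with the two defining limits in Condition \ref{ab-admissible}. Since every $\Psi_q$ is a Young function it is continuous and strictly increasing with $\Psi_q(0)=0$ and $\Psi_q(t)\rightarrow\infty$, so $\Psi_q^{-1}$ is a well-defined increasing bijection of $[0,\infty)$ onto itself and the inequality $\Psi_q(s)>t$ is equivalent to $s>\Psi_q^{-1}(t)$. The middle inequality in \eqref{ab-est} is automatic, so the content is the two outer bounds, which I would prove separately by comparing $\Psi_q^{-1}(t)$ against arbitrary test levels $s$ on either side of $\beta$ and of $\alpha$. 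I may assume $0<\alpha\le\beta<\infty$, since the bound $\alpha\le\liminf$ is vacuous when $\alpha=0$ (as $\Psi_q^{-1}\ge 0$) and $\limsup\le\beta$ is vacuous when $\beta=\infty$.

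For the upper bound, fix any $s>\beta$. By $(\alpha,\beta)$-admissibility $\Psi_q(s)\rightarrow\infty$, so there is $Q$ with $\Psi_q(s)>t$ for all $q\ge Q$ (here $t$ is a fixed finite number), and applying the increasing map $\Psi_q^{-1}$ gives $\Psi_q^{-1}(t)<s$ for all such $q$. Hence $\limsup_{q\rightarrow\infty}\Psi_q^{-1}(t)\le s$, and letting $s\downarrow\beta$ yields $\limsup_{q\rightarrow\infty}\Psi_q^{-1}(t)\le\beta$. This step uses nothing about $\mu(X)$ or the lower range of $t$; it needs only $t<\infty$.

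For the lower bound, fix $s$ with $0<s<\alpha$. The goal is to show $\Psi_q(s)<t$ for all large $q$, for then $\Psi_q^{-1}(t)>s$ and hence $\liminf_{q\rightarrow\infty}\Psi_q^{-1}(t)\ge s$, and letting $s\uparrow\alpha$ finishes the proof. This is where the hypothesis on $t$ is used: if $\mu(X)=\infty$ then $\Psi_q(s)\rightarrow 0$ while $t>0$, so $\Psi_q(s)<t$ eventually; if $\mu(X)<\infty$ then $\limsup_{q\rightarrow\infty}\Psi_q(s)<\mu(X)^{-1}\le t$, so again $\Psi_q(s)<t$ eventually. Either way the conclusion follows.

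The argument is essentially bookkeeping, and the only delicate point — the reason Condition \ref{ab-admissible} is split into cases (i) and (ii) — arises in the lower bound: one must know that the admissible values of $t$ genuinely dominate the limiting value of $\Psi_q(s)$ for $s<\alpha$. In the infinite-measure case any $t>0$ works because that limit is $0$, whereas in the finite-measure case one must require $t\ge\mu(X)^{-1}$ precisely to clear the threshold $\mu(X)^{-1}$ permitted in part (ii). I expect this to be the main (and only real) obstacle; once the correct range of $t$ is fixed, the rest is immediate from the monotonicity of $\Psi_q^{-1}$.
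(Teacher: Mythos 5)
Your proof is correct and takes essentially the same approach as the paper: both arguments reduce the bounds on $\liminf$ and $\limsup$ of $\Psi_q^{-1}(t)$ to the defining limits of $(\alpha,\beta)$-admissibility via the order-reversing equivalence $\Psi_q(s)>t \iff s>\Psi_q^{-1}(t)$, with the hypothesis $t\geq\mu(X)^{-1}$ entering exactly where you say it does. The only cosmetic difference is that the paper phrases the lower bound as a proof by contradiction with an $\eta$--$\varepsilon$ subsequence argument, whereas you run the same comparison directly against test levels $s$, which is if anything cleaner.
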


\begin{proof}
If \(\alpha=0\) then the first inequality in \eqref{ab-est} holds trivially, so we assume that \(\alpha>0\).

Fix \(t>0\) if \(\mu(X)=\infty\) and \(t\geq \mu(X)^{-1}\) if \(\mu(X)<\infty\), and assume toward a contradiction that there exists \(\eta>0\) such that
\[
    \liminf_{q\rightarrow\infty}\Psi_q^{-1}(t)\leq\alpha-\eta.
\]
Given any \(0<\varepsilon<\eta\) then, there exists an increasing sequence $\{q_j\}$ such that \(q_j\rightarrow\infty\) and \(j\rightarrow\infty\) and \(\Psi_{q_j}^{-1}(t)< \alpha-\varepsilon\) for each $j$. Since each \(\Psi_q\) is strictly increasing for all $q>0$, we find that \(t< \Psi_{q_j}(\alpha-\varepsilon)\) for each $j$. Taking the limit as $j\rightarrow\infty$, we see from Condition \ref{ab-admissible} that \(0<t\leq 0\) if \(\mu(X)=\infty\), and \(\mu(X)^{-1}\leq t<\mu(X)^{-1}\) if $\mu(X)<\infty$. In any case this is a contradiction, meaning that
\[
    \alpha-\eta<\liminf_{q\rightarrow\infty}\Psi_q^{-1}(t).
\]
Since \(\eta>0\) was arbitrary, we get the first inequality in \eqref{ab-est}. The estimates for the limit supremum in \eqref{ab-est} follow in an identical fashion.
\end{proof}

If \(\alpha=\beta=\delta\) then Condition \ref{ab-admissible} is the same as \(\delta\)-admissibility, and Proposition \ref{ab-prop} gives
\[\displaystyle\lim_{q\rightarrow\infty} \Psi^{-1}_q(t)=\delta\]
for each $t>0$ when \(\mu(X)=\infty\), and for each $t\geq \mu(X)^{-1}$ when $0<\mu(X)<\infty$. In fact, the limit identity above is equivalent to \(\delta\)-admissibility.

\begin{pro}\label{invlim}
A family of Young functions \(\{\Psi_q\}\) is \(\delta\)-admissible if and only if \begin{eqnarray}\label{scottd-est}
    \lim_{q\rightarrow\infty}\Psi_q^{-1}(t)=\delta
\end{eqnarray}
holds for all \(t>0\) if \(\mu(X)=\infty\), and for all $t\geq\mu(X)^{-1}$ if $0<\mu(X)<\infty$.
\end{pro}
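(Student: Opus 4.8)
The plan is to handle the two implications separately. The forward implication---that $\delta$-admissibility forces \eqref{scottd-est}---requires no new work: it is exactly the special case $\alpha=\beta=\delta$ of Proposition \ref{ab-prop}, as recorded in the discussion preceding this proposition. So the substance is the converse: assuming \eqref{scottd-est} holds for every $t>0$ (when $\mu(X)=\infty$) and for every $t\geq\mu(X)^{-1}$ (when $0<\mu(X)<\infty$), we must recover the three clauses of Condition \ref{a-admissible}, namely $\Psi_q(t)\to\infty$ for $t>\delta$, and for $0<t<\delta$ either $\Psi_q(t)\to0$ (infinite measure) or $\limsup_{q\to\infty}\Psi_q(t)<\mu(X)^{-1}$ (finite measure).

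The structural fact I would use repeatedly is that each $\Psi_q$ is a continuous, strictly increasing bijection of $[0,\infty)$ onto itself, so $\Psi_q^{-1}(\Psi_q(t))=t$ and both $\Psi_q$ and $\Psi_q^{-1}$ are order-preserving. For $t>\delta$ I would argue by contradiction: if $\Psi_q(t)$ failed to tend to infinity, some subsequence $\{q_j\}$ would satisfy $\Psi_{q_j}(t)\leq M$ for a finite $M$, which---after enlarging $M$ if necessary so that $M\geq\mu(X)^{-1}$ in the finite-measure case---gives $t=\Psi_{q_j}^{-1}(\Psi_{q_j}(t))\leq\Psi_{q_j}^{-1}(M)$; but \eqref{scottd-est} is available at $M$ and forces $\Psi_{q_j}^{-1}(M)\to\delta$, whence $t\leq\delta$, a contradiction. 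The case $0<t<\delta$ with $\mu(X)=\infty$ is the mirror image: if $\Psi_q(t)\not\to0$, a subsequence has $\Psi_{q_j}(t)\geq\varepsilon>0$, hence $t\geq\Psi_{q_j}^{-1}(\varepsilon)\to\delta$ by \eqref{scottd-est} (which is available at the small value $\varepsilon$ precisely because $\mu(X)=\infty$), again contradicting $t<\delta$.

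The remaining case---$0<t<\delta$ with $0<\mu(X)<\infty$---is where I expect the one genuine subtlety, and it is the step I would flag as the crux. The obstruction is that \eqref{scottd-est} gives no information about $\Psi_q^{-1}$ below the threshold $\mu(X)^{-1}$, so the direct subsequence trick is unavailable. Instead I would fix an auxiliary point $t_1$ with $t<t_1<\delta$ and evaluate \eqref{scottd-est} at $s=\mu(X)^{-1}$: since $\Psi_q^{-1}(\mu(X)^{-1})\to\delta>t_1$, for all large $q$ we have $\Psi_q^{-1}(\mu(X)^{-1})>t_1$, i.e.\ $\Psi_q(t_1)<\mu(X)^{-1}$. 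Convexity of $\Psi_q$ together with $\Psi_q(0)=0$ then yields $\Psi_q(t)\leq(t/t_1)\Psi_q(t_1)<(t/t_1)\mu(X)^{-1}$ for those $q$, so $\limsup_{q\to\infty}\Psi_q(t)\leq(t/t_1)\mu(X)^{-1}<\mu(X)^{-1}$ because $t<t_1$. This is exactly the finite-measure clause of Condition \ref{a-admissible}. It is worth noting that only the limit superior is controlled here---which is all clause \textup{(ii)} ever needs---since a genuine limit of $\Psi_q(t)$ for $0<t<\delta$ need not exist in this case. Assembling the three cases completes the converse, and hence the proof.
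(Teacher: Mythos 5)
Your proof is correct and follows essentially the same route as the paper's: the forward direction is delegated to Proposition \ref{ab-prop}, the case \(t>\delta\) and the infinite-measure case \(t<\delta\) are the same inverse-function arguments, and the finite-measure case \(t<\delta\) rests on the same convexity inequality \(\Psi_q(\lambda s)\le\lambda\Psi_q(s)\) --- you merely run it directly through an auxiliary point \(t_1\in(t,\delta)\), where the paper runs the contrapositive by scaling \(t\) up to \(t/(1-\varepsilon)\) and comparing with \(\Psi_q^{-1}(\mu(X)^{-1})\). Your observation that only \(\limsup_{q\rightarrow\infty}\Psi_q(t)<\mu(X)^{-1}\) is obtained is not a defect; that is how Condition \ref{a-admissible}(ii) is used throughout the paper.
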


\begin{proof}
Proposition \ref{ab-prop} gives the forward implication, leaving us to prove that if \eqref{scottd-est} holds for \(t\) in the appropriate range then \(\{\Psi_q\}\) is \(\delta\)-admissible. Regardless of whether \(\mu(X)\) is finite or infinite, if \(\Psi_q(t)\leq M\) for some \(M>0\) and large \(q\), then \(t\leq\Psi_q^{-1}(M)\) and \eqref{scottd-est} gives
\[
    t\leq \lim_{q\rightarrow\infty}\Psi_q^{-1}(M)=\delta.
\]
Since $M$ was arbitrary, it follows in the contrapositive that if \(t>\delta\), then \(\displaystyle\lim_{q\rightarrow\infty}\Psi_q(t)=\infty\).

Assume now that $0<\mu(X)<\infty$ and suppose that \eqref{scottd-est} holds for all $t\geq \mu(X)^{-1}>0$. If \[
    \lim_{q\rightarrow\infty}\Psi_q(t)\geq \mu(X)^{-1},
\] 
then given \(0<\varepsilon<1\) we have \(\Psi_q(t)> \mu(X)^{-1}(1-\varepsilon)\) whenever \(q\) is sufficiently large. By convexity of \(\Psi_q\) it follows that
\[
    \mu(X)^{-1}\leq \frac{\Psi_q(t)}{1-\varepsilon}\leq \Psi_q\bigg(\frac{t}{1-\varepsilon}\bigg),
\]
and so  \(t\geq(1-\varepsilon)\Psi_q^{-1}(\mu(X)^{-1})\) whenever $q$ is sufficiently large. Using \eqref{scottd-est} we take the limit to find
\[
    t\geq(1-\varepsilon)\lim_{q\rightarrow\infty}\Psi_q^{-1}(\mu(X)^{-1})=(1-\varepsilon)\delta.
\]
Since \(\varepsilon>0\) was arbitrary, we conclude that \(t\geq\delta\). Thus,  \(\displaystyle\lim_{q\rightarrow\infty}\Psi_q(t)<\mu(X)^{-1}\) when \(t<\delta\).

On the other hand, if \(\mu(X)=\infty\) and \eqref{scottd-est} is satisfied for \(t>0\), and if \(\Psi_q(t)\geq \varepsilon\) for some \(\varepsilon>0\) and for all large \(q\), then 
\[
    t\geq \lim_{q\rightarrow\infty}\Psi_q^{-1}(\varepsilon)=\delta.
\]
Thus, if \(t<\delta\) then \(\displaystyle\lim_{q\rightarrow\infty}\Psi_q(t)\leq\varepsilon\), and since \(\varepsilon>0\) was arbitrary we have \(\displaystyle\lim_{q\rightarrow\infty}\Psi_q(t)=0\).
\end{proof}

\section{Examples}\label{examples}

There are many examples of \(\delta\)-admissible families of Young functions, and moreover they are often easy to construct. In this section we showcase some families to illustrate the utility of our main result, Theorem \ref{main}.

\begin{exmp}
If $\Psi_q(t) = t^q$ and $\Phi(t) = t^r$ for some $r\geq 1$, then \eqref{phicond} is non-decreasing for \(t>0\) whenever $q\geq r$. Moreover the family \(\{\Psi_q\}\) is 1-admissible, and an application of Theorem \ref{main} gives the well-known identity \eqref{basic}.
\end{exmp}

\begin{exmp}
If \(\Phi(t)=t^r\) for \(r\geq 1\) and \(\Psi_q(t)=t^p\log(e-1+t)^q\) for fixed \(p\geq 1\), then
\[
    \frac{\Phi^{-1}(\Psi_q(t))}{t}=\frac{t^\frac{p}{r}\log(e-1+t)^\frac{q}{r}}{t}
\]
fails the growth condition of Theorem \ref{main} when \(r>p\), and satisfies it when \(r\leq p\), regardless of the value of $q>0$. In the latter case, Theorem \ref{main} recovers identity \eqref{loglim} for \(f\in L^r(X,\mu)\).
\end{exmp}

\begin{exmp}\label{logbumpexample}
Given \(N\in\mathbb{N}\) and \(p\geq 1\), consider the family of \(N^\mathrm{th}\)-order iterated log-bumps
\[
    \Psi_{q}(t) = t^p\underbrace{\log\cdots\log}_{N\;\mathrm{times}}(c+t)^q,
\]
where \(c\) is chosen independent of \(q\) so that \(\Psi_q(1)=1\). This family is \(1\)-admissible, and a straightforward adaptation of the argument in Section \ref{log-sec} shows that \eqref{phicond} is non-decreasing on any interval of the form $[0,k]$ for \(f\in L^{\Psi_{q_0}}(X,\mu)\) whenever \(q>q_0>0\) is sufficiently large. Thus, Theorem \ref{main} applies to the Orlicz norms characterized by the $N^\mathrm{th}$ order iterated log-bumps above, giving
\[
    \lim_{q\rightarrow \infty} \|f\|_{L^{\Psi_q}(X,\mu)} =\|f\|_{L^\infty(X,\mu)}
\]
whenever \(f\in L^{\Psi_{q_0}}(X,\mu)\) for some \(q_0>0\). We emphasize that in this example, the convergence of the \(\|f\|_{L^{\Psi_q}(X,\mu)}\) norms to \(\|f\|_{L^{\infty}(X,\mu)}\) is independent of \(p\). Thus for identity \eqref{infty:1} to hold when \(\{\Psi_q\}\) is a family of iterated log-bumps, it is not necessary to assume that \(f\in L^{p+\varepsilon}(X,\mu)\) for any \(\varepsilon>0\). 
\end{exmp}

\begin{exmp}
For any fixed Young function \(\Phi\), one can obtain a \(1\)-admissible family using the structure of \(N^\mathrm{th}\)-order iterated log-bumps by defining
\[
    \Psi_q(t)=\Phi(t)\underbrace{\log\cdots\log}_{N\;\mathrm{times}}(c+t)^q,
\]
where \(c\) is chosen so that \(\Psi_q(1)=\Phi(1)\) for all \(q\). Indeed, the iterated log-bumps of the form
\begin{align}\label{addie}
    \Psi_q(t)=\bigg(t\prod_{j=1}^N\underbrace{\log\cdots\log}_{j\;\mathrm{times}}(c_j+t)\bigg)^p\underbrace{\log\cdots\log}_{N\;\mathrm{times}}(c_N+t)^q
\end{align}
are of this type for \(p\geq1\), provided that the constants \(c_1,\dots,c_N\) are chosen so that the value of \(\Psi_q(\delta)\) is independent of \(q\) for some \(\delta>0\). Once again Theorem \ref{main} applies to this family, allowing us to reproduce the limit in \cite[Theorem 6.1]{addie}. This result is proved in \cite{addie} by means of a modification of the techniques of \cite{DCU-SR21}, which rely on the properties of iterated logarithms.

As in the last example, a similar calculation to that employed in Section \ref{log-sec} shows that if \(\Phi=\Psi_{q_0}\) for \(\Psi_q\) as in \eqref{addie}, then \eqref{phicond} is non-decreasing on any bounded interval of the form \([0,k]\) for \(k>0\) whenever \(q>q_0\) is sufficiently large. Once again, we conclude that the \(L^{\Psi_q}(X,\mu)\) norms of \(f\) converge to \(\|f\|_{L^\infty(X,\mu)}\), provided \(f\in L^{\Psi_{q_0}}(X,\mu)\) for some \(q_0>0\).
\end{exmp}

There are many more families for which \(\delta\)-admissibility can be established, and the interested reader is encouraged to construct their own examples.

\section{Proof of Theorem \ref{main}}\label{proof}

Every Orlicz norm used in this section is defined with respect to a fixed measure space \((X,\mu)\), so we will always write \(\|\cdot\|_{L^\Psi(X,\mu)}=\|\cdot\|_\Psi\) and $\|\cdot\|_{L^\infty(X,\mu)}=\|\cdot \|_\infty$. Fix \(\delta>0\) and suppose that \(\{\Psi_q\}\) is a \(\delta\)-admissible family of Young functions. Identity \eqref{infty:1} is trivial if \(f\equiv 0\), and we will treat the case of unbounded \(f\) separately at the end. Thus, we begin by assuming that \(0<\|f\|_\infty<\infty\), and we note that it is enough to prove \eqref{infty:1} when \(\|f\|_\Phi=1\). Since \eqref{phicond} is non-decreasing on $[0,\|f\|_\infty]$ by hypothesis for \(q\) sufficiently large, we have 
\begin{equation}\label{proof:1}
    \| f\|_{\Psi_q}=\left\| \Psi_q^{-1}(\Phi(|f|))\frac{|f|}{\Psi_q^{-1}(\Phi(|f|))}\right\|_{\Psi_q}
    \leq \left\| \Psi_q^{-1}(\Phi(|f|))\right\|_{\Psi_q}\frac{\|f\|_\infty}{\Psi_q^{-1}(\Phi(\|f\|_\infty))}.
\end{equation}
Additionally, we see that \(\| \Psi_q^{-1}(\Phi(|f|))\|_{\Psi_q}\leq 1\) since by definition of the Luxembourg norm,
\[ 
    \int_X \Psi_q\left(\Psi_q^{-1}\left(\Phi(|f(x)|)\right)\right)d\mu=\int_X\Phi(|f|)d\mu\leq 1.
\]
Moreover, in the case $0<\mu(X)<\infty$ we have that 
\[
    \Phi(\|f\|_\infty)\geq \mu(X)^{-1}\int_X \Phi(|f|)d\mu = \mu(X)^{-1}.
\]
Equality holds above since \(\|f\|_\Phi=1\) and since \(f\) is bounded by assumption (see e.g. \cite[Eq. (3.13)]{trudinger}). Using these estimates, we find from Proposition \ref{invlim} that 
\[
    \limsup_{q\rightarrow\infty}\|f\|_{\Psi_q}\leq\|f\|_\infty\lim_{q\rightarrow\infty}\Psi_q^{-1}(\Phi(\|f\|_\infty))^{-1}=\frac{\|f\|_\infty}{\delta}.
\]

Next suppose that $0<\varepsilon<\|f\|_\infty$ and let \(S=\{x\in\Omega:|f(x)|\geq \|f\|_\infty-\varepsilon\}\). From the definition of the essential supremum and Chebyshev's inequality it follows at once that \(0<\mu(S)\leq \Phi((\|f\|_\infty-\varepsilon)^{-1})\), meaning that \(\mu(S)\) is finite and nonzero. Moreover, Chebyshev's inequality with $\alpha = \|f\|_\infty-\varepsilon$ also shows that 
\[
    (\|f\|_\infty-\varepsilon)\Psi_q^{-1}(\mu(S)^{-1})^{-1}\leq \|f\|_{\Psi_q}.
\]  
From Proposition \ref{invlim} it follows that \(\Psi_q^{-1}(\mu(S)^{-1})^{-1}\rightarrow\delta^{-1}\) as \(q\rightarrow\infty\), since $\mu(S)^{-1}\geq \mu(X)^{-1}$ when $0<\mu(X)<\infty$. As a result we find that 
\[
    \frac{\|f\|_\infty-\varepsilon}{\delta}\leq \displaystyle\liminf_{q\rightarrow\infty}\|f\|_{\Psi_q}.
\]
Since $\varepsilon>0$ was arbitrary, this gives \(\displaystyle\delta^{-1}\|f\|_\infty\leq \displaystyle\liminf_{q\rightarrow\infty}\|f\|_{\Psi_q}\), proving that \eqref{infty:1} holds.

In the case where $\|f\|_\infty = \infty$, choose $N>1$ and set $f_N = \min\{|f|,N\}$ so that $\|f_N\|_\infty=N$. Applying our work above, we see that \[\displaystyle\liminf_{q\rightarrow \infty} \|f\|_{\Psi_q} \geq \liminf_{q\rightarrow \infty} \|f_N\|_{\Psi_q}\geq \delta^{-1}\|f_N\|_\infty = \delta^{-1}N.\] 
Since $N$ may be chosen arbitrarily large, we find $\displaystyle\liminf_{q\rightarrow \infty} \|f\|_{\Psi_q}=\infty$ as required.

Now we show that if \eqref{infty:1} holds for all \(f\in L^\Phi(X,\mu)\), then the family $\{\Psi_q\}$ is $\delta$-admissible. Specifically, we utilize the characterization of \(\delta\)-admissible families given by Proposition \ref{invlim} to recognize that it is enough to show that
\[
    \lim_{q\rightarrow\infty} \Psi^{-1}_q(t) = \delta
\]
for every $t>0$ when $\mu(X)=\infty$, and for $t\geq \mu(X)^{-1}$ when \(\mu(X)\) is finite and positive.

Suppose first that $\mu(X)=\infty$.  Given $t>0$, we use that $(X,\mu)$ is $\sigma$-finite to select sets $S_1,S_2\subset X$ of sufficiently large measure so that with $t_j = \mu(S_j)^{-1}$ we have \(0<t_2<t_1<t\). Using \eqref{infty:1} with $f_j = \chi_{S_j}\in L^\Phi(X,\mu)$ we find from Corollary \ref{chars} that
\[
    \lim_{q\rightarrow \infty} \Psi_q^{-1}(t_j)=\lim_{q\rightarrow \infty} \|f_j\|_{L^{\Psi_q}(X,\mu)}^{-1} =\delta\|f_j\|_{L^\infty(X,\mu)}^{-1}=\delta.
\]
Since we may choose $\lambda\in(0,1)$ so that \(\lambda t_2+(1-\lambda)t = t_1\), the concavity of $\Psi_q^{-1}$ gives
\[
    \lambda\Psi_q^{-1}(t_2) + (1-\lambda)\Psi_q^{-1}(t) \leq \Psi_q^{-1}(t_1).
\]
Letting $q\rightarrow\infty$ we find after taking a limit supremum and rearranging that
\[
    \limsup_{q\rightarrow\infty}\Psi_q^{-1}(t) \leq \delta.
\]
More, since $\Psi_q^{-1}$ is increasing, $\delta\leq\displaystyle\liminf_{q\rightarrow\infty}\Psi_q^{-1}(t)$.  Thus, $\displaystyle\lim_{q\rightarrow\infty}\Psi_q^{-1}(t) = \delta$ for every $t>0$.

In the case that $0<\mu(X)<\infty$, if $t>\mu(X)^{-1}$ we may proceed exactly as above, see Remark \ref{Mainremark}.  If $t=\mu(X)^{-1}$, the required estimate follows at once by applying \eqref{infty:1} to $f =\chi_X\in L^\Phi(X,\mu)$. In any case we have established that 
\[
    \displaystyle\lim_{q\rightarrow\infty}\Psi_q^{-1}(t) = \delta
\]
for $t\geq \mu(X)^{-1}$ when \(0<\mu(X)<\infty\), and for \(t>0\) when \(\mu(X)=\infty\). It follows from Proposition \ref{invlim} that \(\{\Psi_q\}\) is \(\delta\)-admissible.\hfill\(\Box\)

\begin{rem}\label{ab-rem}
If we use the more general Condition \ref{ab-admissible} in place of Condition \ref{a-admissible} in Theorem \ref{main}, simple modifications of the proof above show that one has the estimates
\begin{equation}\label{ab}
    \frac{1}{\beta}\|f\|_\infty\leq \liminf_{q\rightarrow\infty}\|f\|_{\Psi_q}\leq \limsup_{q\rightarrow\infty}\|f\|_{\Psi_q}\leq \frac{1}{\alpha}\|f\|_\infty.
\end{equation}
\end{rem}

It may be the case that the limit of the norms \(\| f\|_{\Psi_q}\) does not exist for a family \(\{\Psi_q\}\) which is \((\alpha,\beta)\)-admissible, as we show with the following example. Let
\[
    \Psi_q(t)=\begin{cases}
    \hfil\frac{1}{2}t^q & 0\leq t\leq \frac{1}{2},\\
    \frac{1}{2}(t^q+(2t-1)^{2+\sin q}) & \frac{1}{2}<t< 1,\\
    \hfil\frac{1}{2}(t^q+(2t-1)^3) & \hfill t\geq1,
    \end{cases}
\]
so that \(\{\Psi_q\}\) is \((\frac{1}{2},1)\)-admissible. If \(f=\chi_S\) for \(S\) with \(2<\mu(S)<\infty\) then \(\|f\|_\infty=1\) and
\[
    1\leq \liminf_{q\rightarrow\infty}\|f\|_{\Psi_q}\leq \limsup_{q\rightarrow\infty}\|f\|_{\Psi_q}\leq 2.
\]
On the other hand, we can show that \(\displaystyle\lim_{q\rightarrow\infty}\Psi_q^{-1}(t)\) does not exist for \(t\in (0,\frac{1}{2})\), meaning that
\begin{equation}\label{normlim}
    \lim_{q\rightarrow\infty}\|f\|_{\Psi_q}=\lim_{q\rightarrow\infty}\Psi_q^{-1}(\mu(S)^{-1})^{-1}
\end{equation}
does not exist. To see this, assume toward a contradiction that there is a \(t\in(0,\frac{1}{2})\) such that
\[
    \lim_{q\rightarrow\infty}\Psi_q^{-1}(t)=d.
\]

First we show that \(d\in(\frac{1}{2},1)\). Given \(\varepsilon>0\) we have for all large \(q\) that \(d-\varepsilon<\Psi_q^{-1}(t)<d+\varepsilon\) and \(\Psi_q(d-\varepsilon)<t<\Psi_q(d+\varepsilon)\). If \(d<\frac{1}{2}\) then we can choose \(\varepsilon\) small enough that \(d+\varepsilon\leq \frac{1}{2}\), meaning that \(\Psi_q(d+\varepsilon)\rightarrow0\) as \(q\rightarrow\infty\). Since \(t<\Psi_q(d+\varepsilon)\) for all large \(q\) this gives a contradiction for large \(q\). Likewise if \(d> 1\) then we can choose \(\varepsilon\) such that \(d-\varepsilon\geq 1\), meaning that \( \Psi_q(d-\varepsilon)\rightarrow\infty\) as \(q\rightarrow\infty\). Since \(\Psi_q(d-\varepsilon)<t<\frac{1}{2}\) this gives another contradiction, and it follows that \(\frac{1}{2}\leq d\leq 1\).

If \(d=\frac{1}{2}\) then we have \(\frac{1}{2}<d+\varepsilon< 1\) when \(\varepsilon\) is small and \(\Psi_q(d+\varepsilon)=\frac{1}{2}((\frac{1}{2}+\varepsilon)^q+(2\varepsilon)^{2+\sin q})\). Choosing \(\varepsilon<\frac{1}{2}\) so small that \((2\varepsilon)^{2+\sin q}\leq 2\varepsilon\leq t\) and then taking \(q\) so large that \((\frac{1}{2}+\varepsilon)^q\leq t\), we get \(\Psi_q(d+\varepsilon)\leq t\), a contradiction. Similarly, if \(d=1\) then \(\frac{1}{2}<d-\varepsilon<1\) for small \(\varepsilon\) and
\[
    \Psi_q(d-\varepsilon)=\frac{(1-\varepsilon)^q+(1-2\varepsilon)^{2+\sin q}}{2}\geq \frac{(1-2\varepsilon)^{2+\sin q}}{2}\geq \frac{(1-2\varepsilon)^{3}}{2}.
\]
Since \(t<\frac{1}{2}\), we can choose \(\varepsilon\) sufficiently small that \(\frac{(1-2\varepsilon)^{3}}{2}\geq t\), another contradiction. It follows that \(\frac{1}{2}<d<1\) and \(\frac{1}{2}<d-\varepsilon<d+\varepsilon<1\) for small \(\varepsilon\). Consequently
\[
    \Psi_q(d-\varepsilon)=\frac{(d-\varepsilon)^q+(2(d-\varepsilon)-1)^{2+\sin q}}{2}\;\;\textrm{ and }\;\;\Psi_q(d+\varepsilon)=\frac{(d+\varepsilon)^q+(2(d+\varepsilon)-1)^{2+\sin q}}{2}.
\]

Taking \(q=\frac{\pi}{2}+k\pi\) for odd \(k\in\mathbb{Z}\) gives \(t>\Psi_q(d-\varepsilon)=\frac{1}{2}((d-\varepsilon)^q+2(d-\varepsilon)-1)\), while using even \(k\) gives \(t<\Psi_q(d+\varepsilon)=\frac{1}{2}((d+\varepsilon)^q+(2(d+\varepsilon)-1)^{3})\). For \(\varepsilon\) small and \(q\) large we show that this is impossible. By convexity of the map \(t\mapsto t^q\) for \(q\geq 1\) we have
\[
    (d+\varepsilon)^q\leq \frac{1}{2}(d-\varepsilon)^q+\frac{1}{2}(d+3\varepsilon)^q<(d-\varepsilon)^q+(d+3\varepsilon)^q,
\]
and moreover, a straightforward calculation shows that
\[
    (2(d+\varepsilon)-1)^{3}=4d(2d-1)(d-1)+4\varepsilon(2+6d^2-6d+6d\varepsilon+2\varepsilon^2-3\varepsilon)+(2d-2\varepsilon-1).
\]
Note that \(4d(2d-1)(d-1)<0\) when \(\frac{1}{2}<d<1\). From the preceding estimates we get that
\[
    (d+\varepsilon)^q+(2(d+\varepsilon)-1)^{3}<(d-\varepsilon)^q+(d+3\varepsilon)^q+4d(2d-1)(d-1)+4\varepsilon(2+2\varepsilon^2+3\varepsilon)+(2d-2\varepsilon-1).
\]
Taking \(\varepsilon\) small and \(q\) large, we can ensure that \((d+3\varepsilon)^q+4d(2d-1)(d-1)+4\varepsilon(2+2\varepsilon^2+3\varepsilon)\leq 0\), and doing this gives the following contradiction: 
\[
    2t<(d+\varepsilon)^q+(2(d+\varepsilon)-1)^{3}\leq (d-\varepsilon)^q+2(d-\varepsilon)-1)<2t.
\]
Thus, the limit in \eqref{normlim} may not exist when the \(\delta\)-admissibility condition fails.

\section{Log-Bump Orlicz Norms}\label{log-sec}

Here we show that Theorem \ref{main} implies \cite[Theorem 1]{DCU-SR21}, which states that \eqref{infty:1} holds with \(\delta=1\) for a specific family of log-bump Young functions. Given $p\geq 1$, the log-bumps are of the form \(\Psi_q(t)=t^p\log(e-1+t)^q\) for \(q>0\), and the collection of all these bumps is a \(1\)-admissible family. Thus, \cite[Theorem 1]{DCU-SR21} follows from Theorem \ref{main} once we demonstrate that for $k>0$ the function \eqref{phicond} is non-decreasing on $[0,k]$ when $\Phi(t) = \Psi_{q_0}(t)$ for some $q_0>0$ and when $q$ is large.  To do this, we first assume that $q>q_0$ and for $t>0$ we define
\[
    F(t)=\frac{t}{\Psi_q^{-1}(\Psi_{q_0}(t))},
\]
so that \(F\) satisfies the equation \(\Psi_{q_0}(t)=\Psi_q(tF(t)^{-1})\). Recalling the form of $\Psi_q$ we see that
\begin{align}\label{inter-sr}
    F(t)^p\log(e-1+t)^{q_0}=\log\bigg(e-1+\frac{t}{F(t)}\bigg)^q
\end{align}
for \(t>0\). It follows from the definition above that \(F\) is continuous on \((0,\infty)\), and to extend \(F\) continuously to zero we observe that
\[
    \log(e-1)^{q_0}\lim_{t\rightarrow 0^+}F(t)^p=\log\bigg(e-1+\lim_{t\rightarrow 0^+}\Psi_q^{-1}(\Psi_{q_0}(t))\bigg)^q=\log(e-1)^q.
\]
Defining \(F(0)=\displaystyle\lim_{t\rightarrow 0^+}F(t)=\log(e-1)^\frac{q-q_0}{p}\) thus ensures that \(F\) is continuous on \([0,\infty)\).

\begin{lemma}\label{lemsr} If \(q>q_0\) then $F(t)>F(0)$ for every \(t>0\).
\end{lemma}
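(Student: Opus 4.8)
The plan is to argue by contradiction using the defining relation \eqref{inter-sr} together with a monotonicity comparison. Suppose that $F(t_0)\leq F(0)$ for some $t_0>0$; since $F$ is continuous on $[0,\infty)$ with $F(0)=\log(e-1)^{(q-q_0)/p}$, I would first show that this forces $F$ to attain a value $\leq F(0)$ at some point, and then examine what \eqref{inter-sr} says there. The key structural observation is that \eqref{inter-sr} can be rewritten as
\[
    F(t)^p=\frac{\log(e-1+t/F(t))^q}{\log(e-1+t)^{q_0}},
\]
so $F(t)$ is implicitly determined by a balance between a numerator that grows like a $q$-th power of a logarithm and a denominator growing like a $q_0$-th power of a logarithm. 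Since $q>q_0$, for large $t$ the right-hand side is large, which already suggests $F(t)>F(0)$; the content of the lemma is that this holds for \emph{all} $t>0$, not just large $t$.

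The main step I would carry out is to suppose, toward a contradiction, that the set where $F(t)\leq F(0)$ is nonempty, and to consider the infimum or a minimizing behavior. If $F(t)\leq F(0)=\log(e-1)^{(q-q_0)/p}$, then $t/F(t)\geq t/\log(e-1)^{(q-q_0)/p}$, but more usefully, since $F(0)^p\log(e-1)^{q_0}=\log(e-1)^q$ and $F(t)\le F(0)$ would give $F(t)^p\log(e-1+t)^{q_0}\leq F(0)^p\log(e-1+t)^{q_0}$, I would compare this with the right-hand side $\log(e-1+t/F(t))^q$. Setting $g(t)=\log(e-1+t)$ for brevity, the relation becomes $F(t)^p g(t)^{q_0}=g(t/F(t))^q$. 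If $F(t)\leq F(0)$, then $t/F(t)\geq t/F(0)$, so $g(t/F(t))\geq g(t/F(0))$, and hence
\[
    F(0)^p g(t)^{q_0}\geq F(t)^p g(t)^{q_0}=g(t/F(t))^q\geq g(t/F(0))^q.
\]
Dividing by $g(0)^q=\log(e-1)^q=F(0)^p\log(e-1)^{q_0}$ and rearranging would reduce everything to showing that $g(t)^{q_0}/\log(e-1)^{q_0}<g(t/F(0))^q/\log(e-1)^q$ for $t>0$, i.e. that the function $t\mapsto (g(t)/g(0))^{q_0}$ is strictly dominated by $t\mapsto (g(t/F(0))/g(0))^q$ on $(0,\infty)$; since both sides equal $1$ at $t=0$, this is a statement comparing the growth rates of two logarithmic expressions raised to powers $q_0$ and $q$ with $q>q_0$, and can be handled by comparing derivatives at $0$ and monotonicity, using $F(0)\geq 1$ when $q>q_0$ (which holds since $\log(e-1)=\log(e-1)\geq 1$... here one must check $\log(e-1)\ge 1$, equivalently $e-1\ge e$, which is false, so in fact $\log(e-1)<1$ and $F(0)<1$ — this sign issue is exactly the delicate point).

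The hard part will be handling the regime of small $t$, where $F(0)<1$ and the inequality $t/F(t)\geq t/F(0)$ cuts the wrong way relative to the naive estimate; here one genuinely needs the strict convexity/concavity structure of $t\mapsto \log(e-1+t)$ and a careful first-order (Taylor) expansion near $t=0$ to see that the $q$-th power term still wins because the logarithm's argument is inflated by the factor $1/F(t)>1/F(0)$ only after one controls $F$ near zero. I anticipate that the cleanest route is to differentiate \eqref{inter-sr} implicitly, obtain an expression for $F'(t)$, show $F'(0^+)>0$ (settling small $t$), and then show that $F$ cannot have an interior local minimum with value $\le F(0)$ by feeding a critical point $F'(t)=0$ back into the differentiated identity to derive a contradiction with $q>q_0$. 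That implicit-differentiation computation, though routine, is where the real work lies.
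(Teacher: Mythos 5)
Your setup is sound and the chain of inequalities you write down is correct: with $g(t)=\log(e-1+t)$, if $F(t)\le F(0)$ at some $t>0$ then indeed
\[
F(0)^p g(t)^{q_0}\ \ge\ F(t)^p g(t)^{q_0}\ =\ g(t/F(t))^q\ \ge\ g(t/F(0))^q,
\]
so the lemma reduces to showing $F(0)^p g(t)^{q_0}<g(t/F(0))^q$ for all $t>0$. The gap is that you never prove this last inequality: you first misjudge its difficulty (worrying that $F(0)<1$ ``cuts the wrong way'' and that a Taylor expansion near $t=0$ is required), and then abandon the reduction in favor of an implicit-differentiation and no-interior-minimum scheme that you also do not carry out. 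In fact the reduction closes in one line, and $F(0)<1$ is precisely what makes it work rather than the obstruction. Since $F(0)^p=g(0)^{q-q_0}$ and $0<F(0)<1$ (because $0<\log(e-1)<1$ and $q>q_0$), for $t>0$ we have $t/F(0)>t>0$, hence $g(t/F(0))>g(t)>g(0)>0$, and therefore
\[
g(t/F(0))^q=g(t/F(0))^{q-q_0}\,g(t/F(0))^{q_0}>g(0)^{q-q_0}\,g(t)^{q_0}=F(0)^p g(t)^{q_0},
\]
which is the desired contradiction. No derivative comparison, concavity argument, or analysis of critical points is needed.

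For comparison, the paper argues directly rather than by contradiction: for $t\ge1$ one has $\Psi_q(t)\ge\Psi_{q_0}(t)$, so $\Psi_q^{-1}(\Psi_{q_0}(t))\le t$ and $F(t)\ge1>F(0)$; for $0<t<1$ one has $F(t)<1$, so $t/F(t)>t$ and \eqref{inter-sr} gives $\log(e-1+t)^q<F(t)^p\log(e-1+t)^{q_0}$, i.e.\ $F(t)>\log(e-1+t)^{(q-q_0)/p}>F(0)$. Your contradiction argument, once closed as above, is an acceptable alternative of comparable length, but as submitted the proof is incomplete at exactly the step you flagged as ``where the real work lies.''
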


\begin{proof} Observe that if \(q>q_0\) then $\Psi_q(t)\geq \Psi_{q_0}(t)$ when $t\geq 1$, while \( \Psi_q(t)< \Psi_{q_0}(t)\) when \(0<t<1\). In the case $t\geq 1$, we use that $\Psi_{q}^{-1}$ is strictly increasing to see that \(t\geq \Psi_q^{-1}(\Psi_{q_0}(t))\), which implies that \(F(t)\geq 1>F(0)\). Likewise, \(0<t<1\) gives \(F(t)<1\) and by \eqref{inter-sr} we find
\[
    \log(e-1+t)^q<\log\bigg(e-1+\frac{t}{F(t)}\bigg)^q=F(t)^p\log(e-1+t)^{q_0}.
\]
Rearranging, we see that \(F(t)>\log(e-1+t)^\frac{q-q_0}{p}>F(0)\).
\end{proof}

Now fix $k>0$, set $I=[0,k]$ and let \(M=\sup_IF\). We show that \(F\) is injective on \(I\) when \(q\) is large enough. To this end, fix \(t_1\in I\), set \(c=F(t_1)\), and note that \(F(t_1)=F(0)\) if and only if \(t_1=0\). On the other hand, if \(t_1>0\) then \(c\in (F(0), M]\) by Lemma \ref{lemsr}.  More, from \eqref{inter-sr} we see that \(t_1\) is a fixed point of the map \(T_c:[0,\infty)\rightarrow \mathbb{R}\) defined by
\[
    T_c(t)=c\exp\big(c^\frac{p}{q}\log(e-1+t)^\frac{q_0}{q}\big)-c(e-1).
\]
Since $F(0)< c$, it is easy to see that \(T_c(0)>0\). Furthermore, a straightforward computation shows that \(T_c''(t)<0\) if and only if
\begin{equation}\label{Tconc}
    q_{0}c^{\frac{p}{q}}\log\left(e-1+t\right)^{\frac{q_{0}}{q}}<q\log\left(e-1+t\right)+q-q_{0}.
\end{equation}
For large \(q\),  this is achieved uniformly in $c\in[0,M]$.  To see why, choose $q\geq q_0$ so that
\[
     M^p\leq \bigg(\frac{q}{q_0}\bigg)^q\log(e-1)^{q-q_0}.
\]
Then \(c^\frac{p}{q}\leq \frac{q}{q_0}\log(e-1)^{1-\frac{q_0}{q}}\leq \frac{q}{q_0}\log(e-1+1)^{1-\frac{q_0}{q}}\) for each \(t\geq 0\), and this shows that \eqref{Tconc} holds for each \(t\geq 0\), and we see \(T_c(t)\) is strictly concave on $(0,\infty)$.

For \(q\) large as above, we find that $T_c(0)>0$ and $T_c$ is a continuous and strictly concave function on $(0,\infty)$.  Thus, $T_c(t)$ has a unique fixed point in $[0,\infty)$ and so it is $t_1$.  This gives $F$ injective on $I$ since $F(t_2)=c=F(t_1)$ shows that $t_1$ and $t_2$ are fixed points of $T_c(t)$. Since $F$ is a continuous, injective function on $I$, the Intermediate Value Theorem shows that $F$ is strictly monotone on $I$. Lastly, since Lemma \ref{lemsr} shows that $F(0)<F(t)$ for $t\in I$, we conclude that $F$ is strictly increasing on $I$ when $q$ is sufficiently large. Thus, with the hypotheses of Theorem \ref{main} verified, we have reproved \cite[Theorem 1]{DCU-SR21}.

\section{Necessity of Admissibility Conditions}\label{counters}

Finally, we show that Conditions \ref{a-admissible} and \ref{ab-admissible} are necessary for the norm limit to be related to the essential supremum of a function. This means that our admissibility conditions cannot be weakened in Theorem \ref{main} or Remark \ref{ab-rem}.

\begin{thm}\label{huh}
Let \(\{\Psi_q\}\) be a family of Young functions, let \(\Phi\) be a Young function for which \eqref{phicond} is non-decreasing, and assume that there exists \(f\in L^\Phi(X,\mu)\cap L^\infty(X,\mu)\) such that
\begin{equation}\label{boundss}
    0<\liminf_{q\rightarrow\infty}\|f\|_{\Psi_q}\leq \limsup_{q\rightarrow\infty}\|f\|_{\Psi_q}<\infty.
\end{equation}
Then  \(\{\Psi_q\}\) is \((\alpha,\beta)\)-admissible for some \(\beta>0\) and \(\alpha\geq 0\).
\end{thm}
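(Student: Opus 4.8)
The plan is to distill from the single test function $f$ enough information about the inverses $\Psi_q^{-1}$ to locate a finite level past which the family blows up, and then to take $\alpha=0$, so that only the blow-up clause of Condition \ref{ab-admissible} needs to be checked. Since $\liminf_q\|f\|_{\Psi_q}>0$ forces $f\not\equiv0$, I may rescale $f$ and assume $\|f\|_\Phi=1$; this only multiplies the quantities in \eqref{boundss} by a positive constant. Write $m=\|f\|_\infty\in(0,\infty)$ and $M_0=\Phi(m)$.

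First I would reuse the upper estimate from the proof of Theorem \ref{main}. Because \eqref{phicond} is non-decreasing on $[0,m]$ and $\int_X\Phi(|f|)\,d\mu\le1$, the factorization used there, together with monotonicity of the Luxembourg norm and the bound $\|\Psi_q^{-1}(\Phi(|f|))\|_{\Psi_q}\le1$, gives $\|f\|_{\Psi_q}\le m\,\Psi_q^{-1}(M_0)^{-1}$, hence $\Psi_q^{-1}(M_0)\le m/\|f\|_{\Psi_q}$. Taking the limit supremum and using $\liminf_q\|f\|_{\Psi_q}>0$ yields $\limsup_q\Psi_q^{-1}(M_0)<\infty$. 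Since each $\Psi_q^{-1}$ is concave with $\Psi_q^{-1}(0)=0$, one has $\Psi_q^{-1}(M)\le\max\{1,M/M_0\}\,\Psi_q^{-1}(M_0)$, so $g(M):=\limsup_q\Psi_q^{-1}(M)$ is finite for every $M>0$ and non-decreasing in $M$.

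The crux — and the step I expect to be the main obstacle — is to show that $g$ is bounded, i.e. that $\beta:=\sup_{M>0}g(M)<\infty$. This is where the hypothesis $\limsup_q\|f\|_{\Psi_q}<\infty$ must enter: the idea is that if $g$ were unbounded then $\Psi_q$ would flatten so severely along suitable subsequences that, combining the Orlicz Chebyshev inequality of Theorem \ref{markymark} (applied at levels $\alpha\in(0,m)$, where $0<\mu(\{|f|\ge\alpha\})<\infty$ because $f\in L^\Phi\cap L^\infty$) with the concavity bound above, one would produce a subsequence along which $\|f\|_{\Psi_q}\to\infty$, contradicting \eqref{boundss}. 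Calibrating the level $\alpha$ against the rate of growth of $g$ so as to force this blow-up is the delicate point. A companion and easier observation gives $\beta>0$: if $\Psi_q(t)\to\infty$ for every $t>0$, then $\Psi_q(|f|/\lambda)\to\infty$ pointwise on the set $\{f\neq0\}$ of positive measure, so Fatou's lemma forces $\|f\|_{\Psi_q}\to\infty$, again contradicting \eqref{boundss}.

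With $\beta\in(0,\infty)$ in hand the conclusion is immediate. For any $t>\beta$ and any $M>0$ we have $\limsup_q\Psi_q^{-1}(M)=g(M)\le\beta<t$, so $\Psi_q(t)>M$ for all sufficiently large $q$; as $M$ was arbitrary, $\lim_q\Psi_q(t)=\infty$. Since the small-$t$ clause of Condition \ref{ab-admissible} is vacuous when $\alpha=0$, this exhibits $\{\Psi_q\}$ as $(0,\beta)$-admissible, which is of the required form. If one prefers the sharper pair suggested by Remark \ref{ab-rem}, one would instead try $\alpha=\|f\|_\infty/\limsup_q\|f\|_{\Psi_q}$ and $\beta=\|f\|_\infty/\liminf_q\|f\|_{\Psi_q}$ and verify Condition \ref{ab-admissible} for these values by the same estimates, but the weaker pair already suffices for the statement.
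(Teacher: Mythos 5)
Your two completed steps are correct, and together they amount to exactly what the paper's own proof establishes. The upper estimate $\Psi_q^{-1}(\Phi(\|f\|_\infty))\le \|f\|_\infty/\|f\|_{\Psi_q}$ combined with $\liminf_q\|f\|_{\Psi_q}>0$ gives $\limsup_q\Psi_q^{-1}(M)<\infty$ for each fixed $M>0$, which is essentially the paper's second observation that $\liminf_q\Psi_q(t)>0$ for all large $t$ (the paper gets it by ruling out $\Psi_q^{-1}(t)\to\infty$); and your Fatou argument rules out $\Psi_q(t)\to\infty$ for every $t>0$, which is the paper's first observation (obtained there via the Orlicz Chebyshev inequality). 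Your routes to these two facts are clean and arguably tidier than the paper's.

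The genuine gap is precisely the step you flag as the crux: $\beta:=\sup_{M>0}g(M)<\infty$, where $g(M)=\limsup_q\Psi_q^{-1}(M)$. No calibration of levels will close it, because the claim is false under the stated hypotheses. Take $\Psi_q=\Psi$ for a single fixed Young function $\Psi$ and every $q$, take $\Phi=\Psi$ (so that \eqref{phicond} is identically $1$, hence non-decreasing), and $f=\chi_S$ with $0<\mu(S)<\infty$; then $\|f\|_{\Psi_q}=\Psi^{-1}(\mu(S)^{-1})^{-1}$ is a positive constant, so \eqref{boundss} holds, yet $g(M)=\Psi^{-1}(M)\to\infty$ and there is no finite $\beta$ with $\lim_q\Psi_q(t)=\infty$ for $t>\beta$. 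Your heuristic for forcing blow-up also cannot succeed: Chebyshev controls $\Psi_q^{-1}$ at the \emph{fixed} argument $\mu(\{|f|\ge\alpha\})^{-1}$, whereas unboundedness of $g$ concerns $\Psi_q^{-1}(M)$ as $M\to\infty$, and the constant family shows these are independent. What the paper actually does is stop after the two facts you did prove---the family neither diverges at every $t>0$ nor degenerates to zero at every $t>0$---and then asserts $(\alpha,\beta)$-admissibility with $\beta<\infty$; the passage from $\liminf_q\Psi_q(t)>0$ for large $t$ to the divergence clause of Condition \ref{ab-admissible} with finite $\beta$ is not justified there either. So you have correctly isolated the missing step, but it is not a deferred computation: the conclusion has to be read in the weaker sense of the two statements you proved, and neither your outline nor the paper's argument delivers the finite-$\beta$ divergence clause as literally stated.
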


\begin{proof}
First assume to the contrary that \(\Psi_q(t)\rightarrow\infty\) as \(q\rightarrow\infty\) for each fixed \(t>0\). Arguing as in the proof of Proposition \ref{ab-prop}, we conclude that \(\Psi_q^{-1}(t)\rightarrow 0\) as \(q\rightarrow\infty\) for each \(t>0\). Since the limit infimum in \eqref{boundss} is non-zero we have \(0<\|f\|_\infty\), and as in the proof of Theorem \ref{main} we can choose \(\varepsilon<\|f\|_\infty\) to see that
\[
    (\|f\|_\infty-\varepsilon)\liminf_{q\rightarrow\infty}\Psi_q^{-1}(\mu(\{x\in\Omega:|f(x)|\geq \|f\|_\infty-\varepsilon\})^{-1})^{-1}\leq \liminf_{q\rightarrow\infty}\|f\|_{\Psi_q}.
\]  
By definition of the essential supremum, \(\mu(\{x\in\Omega:|f(x)|\geq \|f\|_\infty-\varepsilon\})>0\), meaning that the limit on the left-hand side above diverges and \(\|f\|_{\Psi_q}\rightarrow\infty\) as \(q\rightarrow\infty\), contradicting the right-hand limit of \eqref{boundss}. Thus, there is a $t_0>0$ for which $\displaystyle\limsup_{q\rightarrow\infty}\Psi_q(t_0)<\infty$, and therefore\vspace{-0.1em}
\[
    \limsup_{q\rightarrow\infty}\Psi_q(t)<\infty
\]
holds for every $0\leq t\leq t_0$ since each Young function is strictly increasing. This means that \((\alpha,\beta)\)-admissibility holds for some \(\alpha\geq0\) and \(\beta>0\).

Similarly, suppose that \(\Psi_q(t)\rightarrow0\) as \(t\rightarrow\infty\) for each fixed \(t>0\), so that \(\Psi_q^{-1}(t)\rightarrow\infty\) by the argument of Proposition \ref{ab-prop}. Arguing as in Section \ref{proof} we have
\[
    \limsup_{q\rightarrow\infty}\|f\|_{\Psi_q}\leq\|f\|_\infty\limsup_{q\rightarrow\infty}\Psi_q^{-1}(\Phi(\|f\|_\infty))^{-1}=0,
\]
and once again this contradicts \eqref{boundss}. Since each \(\Psi_q\) is strictly increasing we conclude that \(\displaystyle\liminf_{q\rightarrow\infty}\Psi_q(t)>0\) for large \(t>0\). Thus there exists \(\alpha\leq\beta<\infty\) satisfying Condition \ref{ab-admissible}.
\end{proof}

In the case of \(\delta\)-admissibility, where \(\alpha=\beta=\delta\), the argument above shows that Theorem \ref{main} fails when \(\delta\) is not both positive and finite.

\begin{cor}
Let \(\{\Psi_q\}\) be a family of Young functions such that for every \(t>0\),
\[
    \lim_{q\rightarrow\infty}\Psi_q(t)=\infty\qquad(\textrm{resp. }\lim_{q\rightarrow \infty} \Psi_q(t)=0).
\]
If \(f\) satisfies the remaining hypotheses of Theorem \ref{main} then regardless of the value of \(\|f\|_\infty\),
\[
    \lim_{q\rightarrow \infty} \|f\|_{L^{\Psi_q}(X,\mu)}=\infty\qquad(\textrm{resp. }\lim_{q\rightarrow \infty} \|f\|_{L^{\Psi_q}(X,\mu)}=0).
\]
\end{cor}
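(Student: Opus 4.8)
The plan is to read both claims off estimates already established for Theorem~\ref{main} and Theorem~\ref{huh}, since the stated hypotheses put us outside the $\delta$-admissible regime. The common preliminary step is to transfer the pointwise behaviour of $\Psi_q$ to that of $\Psi_q^{-1}$: if $\Psi_q(t)\to\infty$ for every $t>0$ then $\Psi_q^{-1}(t)\to 0$ for every $t>0$, and dually if $\Psi_q(t)\to 0$ for every $t>0$ then $\Psi_q^{-1}(t)\to\infty$ for every $t>0$. Each implication is the elementary contradiction argument already used in Proposition~\ref{ab-prop} and in the proof of Theorem~\ref{huh}: if, say, $\Psi_{q_j}^{-1}(t)\geq\eta>0$ along some subsequence $q_j\to\infty$, strict monotonicity of $\Psi_{q_j}$ gives $t\geq\Psi_{q_j}(\eta)\to\infty$, which is impossible; the other case is symmetric.

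For the regime $\Psi_q(t)\to\infty$ for all $t>0$, I would discard the trivial case $f\equiv 0$ and fix a single level $\alpha>0$ with $\mu(\{|f|\geq\alpha\})>0$. Since $f\in L^\Phi(X,\mu)$, Chebyshev's inequality (Theorem~\ref{markymark}) applied to $\Phi$ also shows $\mu(\{|f|\geq\alpha\})\leq\Phi(\alpha/\|f\|_\Phi)^{-1}<\infty$, so this measure is a fixed, positive, finite number. Applying Theorem~\ref{markymark} again, now with this $\alpha$ and with $\Psi_q$, gives
\[
    \alpha\,\Psi_q^{-1}\!\big(\mu(\{|f|\geq\alpha\})^{-1}\big)^{-1}\leq\|f\|_{\Psi_q},
\]
whose left-hand side tends to $\infty$ because $\Psi_q^{-1}$ of the fixed positive number $\mu(\{|f|\geq\alpha\})^{-1}$ tends to $0$. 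Hence $\|f\|_{\Psi_q}\to\infty$; only one level $\alpha$ is used, so this also handles $\|f\|_\infty=\infty$.

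For the regime $\Psi_q(t)\to 0$ for all $t>0$, I would first treat $\|f\|_\infty<\infty$: discarding $f\equiv 0$ and normalizing so $\|f\|_\Phi=1$, the hypothesis that $t/\Psi_q^{-1}(\Phi(t))$ is non-decreasing on $[0,\|f\|_\infty]$ for large $q$, combined with the identity $\int_X\Psi_q\big(\Psi_q^{-1}(\Phi(|f|))\big)\,d\mu=\int_X\Phi(|f|)\,d\mu\leq 1$, reproduces the chain of inequalities from the first half of the proof of Theorem~\ref{main}:
\[
    \|f\|_{\Psi_q}\leq\big\|\Psi_q^{-1}(\Phi(|f|))\big\|_{\Psi_q}\,\frac{\|f\|_\infty}{\Psi_q^{-1}(\Phi(\|f\|_\infty))}\leq\frac{\|f\|_\infty}{\Psi_q^{-1}(\Phi(\|f\|_\infty))}.
\]
The right-hand side tends to $0$ since $\Phi(\|f\|_\infty)>0$ forces $\Psi_q^{-1}(\Phi(\|f\|_\infty))\to\infty$.

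The step I expect to be the main obstacle is the remaining case $\|f\|_\infty=\infty$ in the decreasing regime, where the estimate above becomes the indeterminate $\infty/\infty$. The natural plan is to split $f=f\chi_{\{|f|\leq R\}}+f\chi_{\{|f|>R\}}$, apply the bound above to the bounded first term for each fixed $R$ (letting $q\to\infty$), and then shrink the tail $\|f\chi_{\{|f|>R\}}\|_{\Psi_q}$ by taking $R\to\infty$, using absolute continuity of $\int_X\Phi(|f|)\,d\mu$. The delicate point is that one must make the tail estimate uniform in $q$ while letting $R$ grow with $q$; I would expect this to go through only once one knows $\|f\|_{\Psi_q}<\infty$ for large $q$ (automatic when $\|f\|_\infty<\infty$), and determining the precise hypothesis on $f$ that makes the argument work is where the real effort lies.
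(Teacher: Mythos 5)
Your treatment of the first regime ($\Psi_q(t)\to\infty$ for every $t>0$) and of the second regime when $\|f\|_\infty<\infty$ is exactly the argument the paper intends: the corollary carries no separate proof and is meant to follow from the two halves of the proof of Theorem \ref{huh}, namely the Chebyshev lower bound $\alpha\,\Psi_q^{-1}(\mu(\{|f|\ge\alpha\})^{-1})^{-1}\le\|f\|_{\Psi_q}$ at a single level $\alpha$ of positive finite measure, and the upper bound $\|f\|_{\Psi_q}\le\|f\|_\infty/\Psi_q^{-1}(\Phi(\|f\|_\infty))$ from Section \ref{proof}, together with the elementary transfer of the pointwise limits of $\Psi_q$ to $\Psi_q^{-1}$. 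Those parts of your proposal are correct and are on the paper's route.

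The case you single out as the obstacle --- the decreasing regime with $\|f\|_\infty=\infty$ --- is a genuine gap, but it is a gap in the statement rather than a defect of your argument: the paper offers no proof of it either (Theorem \ref{huh} assumes $f\in L^\infty$, and the upper-bound chain needs \eqref{phicond} monotone on $[0,\|f\|_\infty]$). Your suspicion that an extra hypothesis is required is justified, and your truncation plan cannot be completed in general. Take $X=[0,1]$ with Lebesgue measure, $\Phi(t)=t^2$, $\Psi_q(t)=t^{2+1/q}/q$, and $f(x)=x^{-1/2}\log(e/x)^{-1}$. Each $\Psi_q$ is a Young function with $\Psi_q(t)\to0$ for every fixed $t>0$; the quotient $t/\Psi_q^{-1}(\Phi(t))=q^{-q/(2q+1)}\,t^{1/(2q+1)}$ is non-decreasing on $[0,\infty)$ for every $q$; and $f\in L^2([0,1])$ but $f\notin L^{2+\varepsilon}([0,1])$ for any $\varepsilon>0$, so $\int_X\Psi_q(|f|/\lambda)\,d\mu=q^{-1}\lambda^{-(2+1/q)}\int_0^1 f^{2+1/q}\,dx=\infty$ for every $\lambda>0$ and every $q$. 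Hence $\|f\|_{L^{\Psi_q}(X,\mu)}=\infty$ for all $q$ and the claimed limit $0$ fails. The decreasing-regime conclusion should therefore be read as restricted to $f\in L^\Phi(X,\mu)\cap L^\infty(X,\mu)$ (or supplemented by an assumption such as $f\in L^{\Psi_q}(X,\mu)$ for large $q$, under which your tail estimate could be attempted); with that restriction your proof is complete and agrees with the paper's.
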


To illustrate, if \(f=\chi_{[0,1]}\) and \(\Psi_q(t) = t^p\log(e+t)^q\) then \(\|f\|_{L^{\Psi_q}(\mathbb{R},dx)}=\Psi_q^{-1}(1)^{-1}\), but \(\{\Psi_q\}\) is not $\delta$-admissible for any $\delta>0$. A straightforward calculation shows that
\[
    \lim_{q\rightarrow\infty}\|f\|_{L^{\Psi_q}(\mathbb{R},dx)}\geq\liminf_{q\rightarrow\infty}\|f\|_{L^{\Psi_q}(\mathbb{R},dx)}=\liminf_{q\rightarrow\infty}\Psi_q^{-1}(1)^{-1}=\infty>1= \|f\|_{L^\infty(\mathbb{R},dx)}.
\]
Thus, if Condition \ref{a-admissible} fails then the induced Orlicz norms may not converge to \(\|f\|_{L^\infty(X,\mu)}\).

\bibliographystyle{alpha}

\end{document}